\definecolor{my_color}{rgb}{0,0.5,0.5}
\definecolor{MIXT}{rgb}{0.8,0.5,0.2}
\numberwithin{equation}{section}
\font\tencyr=wncyr10 
\font\tencyi=wncyi10 
\font\tencysc=wncysc10 
\def\rus{\tencyr\cyracc}
\def\rusi{\tencyi\cyracc}
\def\rusc{\tencysc\cyracc}
\newcounter{rmke}
\numberwithin{rmke}{section}
\newtheorem{thm}{Theorem}[section]
\newtheorem{lm}[thm]{Lemma}
\newtheorem{prop}[thm]{Proposition}
\theoremstyle{remark}
\newtheorem{rmk}[thm]{Remark}
\theoremstyle{definition}
\newtheorem{ex}[thm]{Example}
\theoremstyle{plain}
\newcommand {\g}{{\mathfrak g}}
\newcommand {\h}{{\mathfrak h}}
\newcommand {\el}{{\mathfrak l}}
\newcommand {\n}{{\mathfrak n}}
\newcommand {\q}{{\mathfrak q}}
\newcommand {\rr}{{\mathfrak r}}
\newcommand {\es}{{\mathfrak s}}
\newcommand{\gt}{\mathfrak}
\newcommand {\eus}{\EuScript}
\newcommand {\gS}{{\eus S }}
\newcommand {\tvp}{\tilde\varpi}
\newcommand {\BZ}{{\mathbb Z}}
\newcommand {\md}{/\!\!/}
\newcommand {\codim}{{\mathrm{codim\,}}}
\newcommand {\ind}{{\mathsf{ind\,}}}
\newcommand {\Lie}{{\mathsf{Lie\,}}}
\newcommand {\rk}{{\mathrm{rk\,}}}
\newcommand {\spe}{{\mathsf{Spec\,}}}
\newcommand {\trdeg}{{\mathrm{tr.deg\,}}}
\newcommand {\tri}{\mathfrak{sl}_2}
\newcommand {\GR}[2]{{\textrm{{\bf #1}}}_{#2}}
\newcommand {\ov}{\overline}
\newcommand {\un}{\underline}
\newcommand {\gig}{\mathsf{g.i.g.}}
\newcommand {\beq}{\begin{equation}}
\newcommand {\eeq}{\end{equation}}
\renewcommand{\le}{\leqslant}
\renewcommand{\ge}{\geqslant}
\newcommand{\odin}{\mathrm{{1}\!\! 1}}
\newcommand{\bH}{\boldsymbol{H}}
\newcommand {\bbk}{\Bbbk}
\begin{document}
\setlength{\parskip}{3pt plus 2pt minus 0pt}
\hfill { {\color{blue}\scriptsize April 30, 2017}}
\vskip1ex

\title[Symmetric invariants related to representations of exceptional groups]
{Symmetric invariants related to representations of exceptional simple groups}
\author[D.\,Panyushev]{Dmitri I. Panyushev}
\address[D.P.]%
{Institute for Information Transmission Problems of the R.A.S, Bolshoi Karetnyi per. 19, 
Moscow 127051, Russia}
\email{panyushev@iitp.ru}
\author[O.\,Yakimova]{Oksana S.~Yakimova}
\address[O.Y.]{Institut f\"ur Mathematik, Friedrich-Schiller-Universit\"at Jena,  07737 Jena, 
Deutschland}
\email{oksana.yakimova@uni-jena.de}
\thanks{The research of the first author was carried out at the IITP RAS at the expense of the Russian Foundation for Sciences (project {\rus N0} 14-50-00150).  The second author is partially supported by  Graduiertenkolleg GRK 1523 ``Quanten- und Gravitationsfelder".}
\keywords{index of Lie algebra, coadjoint representation, symmetric invariants}
\subjclass[2010]{14L30, 17B08, 17B20, 22E46}
\begin{abstract}
We classify the 
finite-dimensional rational representations $V$ of the exceptional algebraic groups $G$ with $\g=\Lie G$ 
such that the symmetric invariants of the semi-direct product $\g\ltimes V$, where $V$ is an Abelian ideal, form a polynomial ring.
\end{abstract}
\maketitle

\begin{flushright}
{\small{\it To our teacher Ernest B.~Vinberg on occasion of his 80-th birthday}}
\end{flushright}

\section*{Introduction}

\noindent
The ground field $\bbk$ is algebraically closed and of characteristic $0$. 
In 1976, Vinberg et al{.} classified the irreducible representations of simple algebraic groups with 
polynomial rings of invariants~\cite{KPV76}. Such representations are sometimes called {\it coregular}.
The most important class of coregular representations of reductive groups is provided by the $\theta$-groups introduced and studied in depth by Vinberg, see~\cite{V76}.  
Since then 
the classification of coregular representations of semisimple groups has attracted much attention. The 
{\bf reducible} coregular representations of {\bf simple} groups have been classified independently by 
Schwarz~\cite{gerry1} and Adamovich--Golovina~\cite{ag79}, while the {\bf irreducible} coregular representations of {\bf semisimple} groups are classified by Littelmann~\cite{litt}.

For several decades, 
only rings of invariants of representations of {\bf reductive} groups were considered. However, invariants   
of non-reductive groups are also very important in Representation Theory.
Let $S$ be an algebraic group 
with $\es=\Lie S$. The invariants of $S$ in the symmetric algebra $\gS (\es)$ of $\es$ 
(=\,{\it symmetric invariants\/} of $\es$ or $S$) help us to understand 
the coadjoint action $(S:\es^*)$ and in particular, coadjoint orbits, as well as representation theory of $S$.
Several classes of non-reductive groups $S$ such that $\gS (\es)^S$ is a polynomial ring have been
found recently, see e.g. \cite{J,trio,coadj,Y}. A quest for this type of groups continues. 
Hopefully, one can find interesting properties of $S$ and its representations under the assumption that 
the ring $\gS (\es)^S$ is polynomial. 
 
A natural class of non-reductive groups, which is still tractable, is given by a semi-direct product 
construction, see Section~\ref{sect:kos-th} for details. In~\cite{Y}, the following problem has been proposed:
to classify all representations $V$ of simple algebraic groups $G$ such that 
the ring of symmetric invariants of the semi-direct product $\q=\g\ltimes V$ is polynomial (in other words, 
the coadjoint representation of $\q$ is coregular). It is easily seen that if $\q$ has this property,
then $\bbk[V^*]^G$ is a polynomial ring, too. Therefore, the suitable representations $(G,V)$ have to
be extracted from the lists of \cite{gerry1,ag79}.
Some natural representations of $G=SL_n$ are studied in~\cite{Y}. Those considerations imply that the 
$SL_n$-case is very difficult. For this reason, we take here the other end and classify such representations
$(G,V)$ for the {\it exceptional} algebraic groups $G$. In a forthcoming article, we provide such a 
classification for the representations of the orthogonal and symplectic groups. To a great extent, our classification results rely on the theory developed by the second author in \cite{Y16}.

{\sl \un{ Notation}.}
Let $S$ act on an irreducible affine variety $X$. Then $\bbk[X]^S$ 
is the algebra of $S$-invariant regular functions on $X$ and $\bbk(X)^S$
is the field of $S$-invariant rational functions. If $\bbk[X]^S$ is finitely generated, then 
$X\md S:=\spe \bbk[X]^S$.  Whenever $\bbk[X]^S$ is a graded polynomial ring, 
the elements of any set of algebraically independent homogeneous generators 
will be referred to as {\it basic invariants\/}. If $V$ is an $S$-module and $v\in V$, then 
$\es_v=\{\zeta\in\es\mid \zeta{\cdot}v=0\}$ is the {\it stabiliser\/} of $v$ in $\es$ and 
$S_v=\{s\in S\mid s{\cdot}v=v\}$ is the {\it isotropy group\/} of $v$ in $S$.

In explicit examples of Section~\ref{sect:rank} and in Table~\ref{table-ex1}, 
we identify the representations $V$ of semisimple groups with their highest weights, using the {\it multiplicative\/} notation and the 
Vinberg--Onishchik numbering of the fundamental weights~\cite{VO}. For instance, if $\varpi_1,\dots,\varpi_n$ are the fundamental weights of a simple algebraic group $G$, then $V=\varpi_i^2+2\varpi_{j}$ stands for the direct sum of three simple $G$-modules, with highest weights $2\varpi_i$ (once) and $\varpi_{j}$ (twice). If $H\subset G$ is semisimple and we are describing the restriction of $V$ to $H$ (i.e., $V\vert_H$), then the fundamental weights of 
$H$ are denoted by $\tvp_i$. Write `$\odin$' for the trivial one-dimensional representation.

\section{Preliminaries on the coadjoint representations}
\label{sect:prelim}

\noindent
Let $S$\/ be an affine algebraic group with Lie algebra $\es$. The symmetric algebra 
$\gS (\es)$ over $\bbk$ is identified with the graded algebra of polynomial functions on $\es^*$ and we 
also write $\bbk[\es^*]$ for it.  
The {\it index of}\/ $\es$, $\ind\es$, is the minimal codimension of $S$-orbits in $\es^*$. Equivalently,
$\ind\es=\min_{\xi\in\q^*} \dim \es_\xi$. By Rosenlicht's theorem~\cite[2.3]{VP}, one also has
$\ind\es=\trdeg\bbk(\es^*)^S$. The ``magic number'' associated with $\es$ is $b(\es)=(\dim\es+\ind\es)/2$. Since the coadjoint orbits are even-dimensional, the magic number is an integer. If $\es$ is reductive, then  
$\ind\es=\rk\es$ and $b(\es)$ equals the dimension of a Borel subalgebra. The Poisson bracket $\{\ ,\ \}$ in
$\bbk[\es^*]$ is defined on the elements of degree $1$ (i.e., on $\es$) by $\{x,y\}:=[x,y]$. 
The {\it centre\/} of the Poisson algebra $\gS(\es)$ is $\gS(\es)^\es=\{F\in \gS(\es)\mid \{F,x\}=0 \ \ \forall x\in\es\}$. If $S^o$ is the identity component of $S$, then $\gS(\es)^\es=\gS(\es)^{S^o}$.

The set of $S$-{\it regular\/} elements of $\es^*$ is 
$\es^*_{\sf reg}=\{\eta\in\es^*\mid \dim S{\cdot}\eta\ge \dim S{\cdot}\eta' \text{ for all }\eta'\in\es^*\}$.
We say that $\es$ has the {\sl codim}--$2$ property if $\codim (\es^*\setminus\es^*_{\sf reg})\ge 2$.
The following useful result appears in~\cite[Theorem\,1.2]{coadj}:
\\[.7ex]
\hbox to \textwidth{\ $(\blacklozenge)$ \hfil
\parbox{400pt}{\it
Suppose that $S$ is connected, $\es$ has the {\sl codim}--$2$ property, and there are homogeneous algebraically
independent $f_1,\dots,f_l\in \bbk[\es^*]^S$ such that $l=\ind\es$ and $\sum_{i=1}^l \deg f_i=b(\es)$.
Then $\bbk[\es^*]^S=\bbk[f_1,\dots,f_l]$ and $(\textsl{d}f_1)_\xi,\dots,(\textsl{d}f_l)_\xi$ are linearly
independent if and only if $\xi\in\es^*_{\sf reg}$.}
\hfil}

\noindent
More generally, one can define the set of $S$-regular elements for any $S$-action on an irreducible variety $X$; that is, $X_{\sf reg}=\{x\in X\mid \dim S{\cdot}x\ge \dim S{\cdot}x' \text{ for all } x'\in X\}$.

We say that the action $(S:X)$ {has a generic stabiliser}, if there exists
a dense open subset $\Omega\subset X$ such that all stabilisers $\es_x$, $x\in \Omega$, are 
$S$-conjugate. Then {\bf any} subalgebra $\es_x$, $x\in\Omega$, is called a {\it generic stabiliser}
(=\,\textsf{g.s.}). The points of $\Omega$ are said to be $S$-generic (or, just generic if the group is clear from the context).
Likewise, one defines  a {\it generic isotropy group} (=\,$\gig$), 
which is a subgroup of $S$. By~\cite[\S\,4]{Ri}, $(S:X)$ has a generic stabiliser if and only if it has a generic isotropy group. It is also shown therein that $\gig$ always exists if $S$ is reductive and $X$ is smooth.
If $H$ is a generic isotropy group for $(S:X)$ and $\h=\Lie H$, then we write $H=\gig(S:X)$ and 
$\h=\textsf{g.s.}(S:X)$.
A systematic treatment of generic stabilisers in the context of reductive group
actions can be found in \cite[\S 7]{VP}. 

Note that if a generic stabiliser for $(S:X)$ exists, then any $S$-generic point is $S$-regular, but not vice versa.

Recall that $f\in \bbk[X]$ is called a {\it semi-invariant} of $S$ if 
$S{\cdot}f\subset \bbk f$. A semi-invariant $f$ is {\it proper} if 
$f\not\in \bbk[X]^S$.

\section{On the coadjoint representations of semi-direct products}
\label{sect:kos-th}

In this section, we gather some results on the coadjoint representation that are specific for 
semi-direct products. In particular, we recall the necessary theory from \cite{Y16}.
\\ \indent
Let $G\subset GL(V)$ be a connected algebraic group with $\g=\Lie G$.  
The vector space $\g\oplus V$ has a natural structure of Lie algebra, the {\it semi-direct product 
of\/ $\g$ and $V$}.
Explicitly, if $x,x'\in \g$ and $v,v'\in V$, then
\[
   [(x,v), (x',v')]=([x,x'], x{\cdot}v'-x'{\cdot}v) \ .
\]
This Lie algebra is denoted by $\es=\g\ltimes V$, and $V\simeq \{(0,v)\mid v\in V\}$ 
is an abelian ideal of $\es$.
The corresponding connected algebraic group $S$ is the semi-direct 
product of $G$ and the commutative unipotent group $\exp(V)\simeq V$. 
The group $S$ can be identified with  $G\times V$,  the product being given by
\[
    (s,v)(s',v')= (ss', (s')^{-1}{\cdot}v+v'), \ \text{ where } \ s,s'\in G .
\]
In particular,  $(s,v)^{-1}=(s^{-1}, -s{\cdot}v)$. Then $\exp(V)$ can be identified with
$1\ltimes V:=\{(1,v)\mid v\in V\} \subset G\ltimes V$.
If $G$ is reductive, then the subgroup $1\ltimes V$ is the unipotent radical of $S$, also denoted
by $R_u(S)$.

There is a general formula for the index of $\es=\g\ltimes V$, which is due to M.~Ra\"is \cite{rais}. Namely, there is a dense open subset $\Omega\subset V^*$ such that $\ind\es=\trdeg\bbk(V^*)^G+\ind\g_\xi$ for any $\xi\in\Omega$.
In particular, if a generic stabiliser for $(G:V^*)$ exists, then one can take $\g_\xi$ to be a generic stabiliser.

\begin{rmk}   \label{rem:useful}
There are some useful observations related to the symmetric invariants of the semi-direct product 
$\es=\g\ltimes V$:
\begin{itemize}
\item[\sf (i)] \ The decomposition $\es^*=\g^*\oplus V^*$ yields a bi-grading of 
$\bbk[\es^*]^S$~\cite[Theorem\,2.3(i)]{coadj}. If ${\bH}$ is a bi-homogenous $S$-invariant, then
$\deg_{\g}\! {\bH}$ and $\deg_{V}\! {\bH}$ stand for the corresponding degrees;
\item[\sf (ii)] \ The algebra $\bbk[V^*]^G$ is contained in $\bbk[\es^*]^S$. Moreover, a minimal generating
system for $\bbk[V^*]^G$ is a part of a minimal generating system of 
$\bbk[\es^*]^S$~\cite[Sect.\,2\,(A)]{coadj}. Therefore, if $\bbk[\es^*]^S$ is a polynomial ring, then so is 
$\bbk[V^*]^G$.
\end{itemize}
\end{rmk}

\begin{prop}[Prop.\,3.11 in \cite{Y16}]         \label{non-red} 
Let $G$ be a connected algebraic group acting on a finite-dimensional vector space $V$. Suppose that 
$G$ has no proper semi-invariants in\/ $\bbk[\es^*]^{1\ltimes V}$ and\/ $\bbk[\es^*]^S$ is a polynomial 
ring in $\ind \es$ variables. 
For  generic $\xi\in V^*$, we then have 
\begin{itemize}
\item the restriction map 
$\psi: \bbk[\es^*]^S \to \bbk[\g^*{\times}\{\xi\}]^{G_\xi{\ltimes}V}\simeq \gS(\g_\xi)^{G_\xi}$ is surjective; 
\item $\gS(\g_\xi)^{G_\xi}$ coincides with $\gS(\g_\xi)^{\g_\xi}$; 
\item $\gS(\g_\xi)^{G_\xi}$ is a polynomial ring in $\ind\g_\xi$ variables. 
\end{itemize}
\end{prop}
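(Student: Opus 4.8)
The plan is to exploit the two structural facts highlighted in Remark~\ref{rem:useful} together with the general position results recalled from \cite{rais,Ri,VP}, and then to run a dimension/degree count of the type formalised in $(\blacklozenge)$. First I would fix a generic $\xi\in V^*$, so that (by Rais' formula and the existence of a generic stabiliser for $(G:V^*)$, which is automatic here since $G$ is reductive and $V^*$ is smooth) we have $\ind\es=\trdeg\bbk(V^*)^G+\ind\g_\xi$ and $\g_\xi=\textsf{g.s.}(G:V^*)$. The restriction map $\psi$ is the evaluation of a bi-homogeneous $S$-invariant on the slice $\g^*\times\{\xi\}$; since $V$ acts trivially on $\g^*\times\{\xi\}$ modulo a shift, one identifies $\bbk[\g^*\times\{\xi\}]^{G_\xi\ltimes V}$ with $\gS(\g_\xi)^{G_\xi}$, which is the second bullet's $\gS(\g_\xi)^{\g_\xi}$ once one knows $G_\xi$ is connected or at least that its component group acts trivially — here the hypothesis on the absence of proper semi-invariants in $\bbk[\es^*]^{1\ltimes V}$ should be what rules out a genuine discrepancy between $G_\xi$- and $\g_\xi$-invariants.

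The core of the argument is surjectivity of $\psi$. I would argue as follows: pick basic invariants $f_1,\dots,f_l$ of the polynomial ring $\bbk[\es^*]^S$ with $l=\ind\es$, chosen bi-homogeneous. A minimal generating system of $\bbk[V^*]^G$ forms part of a minimal generating system of $\bbk[\es^*]^S$ by Remark~\ref{rem:useful}(ii); so we may assume $f_1,\dots,f_r$ (with $r=\trdeg\bbk(V^*)^G$) generate $\bbk[V^*]^G$ and depend only on the $V^*$-variable, while $f_{r+1},\dots,f_l$ genuinely involve $\g^*$. Restricting to $\g^*\times\{\xi\}$ kills the first batch (they become constants) and sends the remaining $l-r=\ind\g_\xi$ invariants into $\gS(\g_\xi)^{G_\xi}$. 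The claim is that these $\ind\g_\xi$ restricted functions are algebraically independent and generate. Algebraic independence is where the absence of proper semi-invariants is used: if a nontrivial polynomial relation held among the $\psi(f_{r+1}),\dots,\psi(f_l)$, one pulls it back and, using that the generic fibre of the moment-type map $\es^*\to\es^*\md S$ over $(\xi\text{-part})$ is exactly the $S$-orbit closure through a generic point (a consequence of the \textsl{codim}--$2$ machinery and $(\blacklozenge)$, or directly of \cite{Y16}'s setup), derives that some $f_i$ is divisible by a proper semi-invariant of $1\ltimes V$ — contradiction.

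That done, $\gS(\g_\xi)^{G_\xi}$ contains $\ind\g_\xi$ algebraically independent elements coming from $\bbk[\es^*]^S$, so $\trdeg\gS(\g_\xi)^{G_\xi}\ge\ind\g_\xi$; the reverse inequality is Rosenlicht applied to $(G_\xi:\g_\xi^*)$, giving $\trdeg\gS(\g_\xi)^{G_\xi}=\ind\g_\xi$. For polynomiality, I would compare degrees: writing $\deg_\g f_i=d_i$, the bi-homogeneity and the equality $\sum_i\deg f_i=b(\es)$ from $(\blacklozenge)$, combined with $b(\es)=b(\g_\xi)+\sum$(degrees of the $V^*$-generators) — which follows from $\dim\es=\dim\g_\xi+2\,\trdeg\bbk(V^*)^G+\dim\g_\xi^{\perp\text{-part}}$ and the analogous magic-number identity proved in \cite{Y16} — forces $\sum_{i>r}\deg_\g\psi(f_i)=b(\g_\xi)$. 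Since $\g_\xi$ evidently has the \textsl{codim}--$2$ property on the relevant locus (inherited via the slice, using that $\es$ does by hypothesis — this is the step I expect to be the main obstacle and which is presumably handled in \cite{Y16} by a transversality argument), $(\blacklozenge)$ applied to $\g_\xi$ yields that $\psi(f_{r+1}),\dots,\psi(f_l)$ freely generate $\gS(\g_\xi)^{G_\xi}$, establishing all three bullets simultaneously. I would rely on \cite{Y16} for the precise form of the \textsl{codim}--$2$ transfer and the magic-number bookkeeping, since those are exactly the technical heart that the cited Prop.\,3.11 packages.
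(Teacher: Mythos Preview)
The paper does not prove this proposition at all; it is quoted verbatim as Prop.~3.11 of \cite{Y16}, and the text immediately following it is only a remark stressing that \emph{$G$ is not assumed to be reductive and $G_\xi$ is not assumed to be connected}. So there is no in-paper proof to compare against.

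That said, your proposal has real gaps relative to the stated hypotheses. You write that the existence of a generic stabiliser for $(G:V^*)$ ``is automatic here since $G$ is reductive and $V^*$ is smooth'' --- but reductivity of $G$ is \emph{not} assumed, and the paper explicitly flags this. Likewise, you repeatedly invoke $(\blacklozenge)$ and the {\sl codim}--$2$ property (``since $\es$ does by hypothesis''), yet neither the {\sl codim}--$2$ property for $\es$ nor for $\g_\xi$ is among the hypotheses of the proposition. In particular, the degree identity $\sum_i\deg f_i=b(\es)$ is a \emph{hypothesis} of $(\blacklozenge)$, not a conclusion, so you cannot simply read it off; and your plan to transfer {\sl codim}--$2$ to $\g_\xi$ via a slice argument is doing work that the actual proposition does not give you the right to do. Finally, your treatment of the second bullet (that $G_\xi$- and $\g_\xi$-invariants coincide) is not really an argument: the ``no proper semi-invariants'' hypothesis concerns $G$ acting on $\bbk[\es^*]^{1\ltimes V}$, not the component group of $G_\xi$ acting on $\gS(\g_\xi)$, and bridging these requires the surjectivity you are still trying to establish. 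The honest route here is the one the paper takes: cite \cite{Y16} for the proof rather than reconstruct it from $(\blacklozenge)$, which is calibrated for a different set of assumptions.
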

\noindent
Note that $G$ is not assumed to be reductive and $G_\xi$ is not assumed to be connected in the above proposition! We mention also that there are  
isomorphisms 
$\bbk[\g^*{\times}\{\xi\}]^{G_\xi{\ltimes}V}\simeq \bbk[\gt g_x]^{G_x}\simeq \gS(\g_\xi)^{G_\xi}$ for any $\xi\in V^*$, see \cite[Lemma~2.5]{Y16}.

From now on, $G$ is supposed to be reductive. The action $(G:V)$ is said to be {\it stable\/} if the union of closed $G$-orbits is dense in $V$. Then a generic stabiliser $\mathsf{g.s.}(G:V)$
is necessarily reductive.

Consider the following assumptions on $G$ and $V$:

\begin{itemize}
\item[($\diamondsuit$)] \  
the action of $(G:V^*)$ is stable, $\bbk[V^*]^G$ is a  polynomial ring, $\bbk[\g^*_\xi]^{G_\xi}$ is a polynomial 
ring for generic $\xi\in V^*$, and $G$ has no proper semi-invariants in $\bbk[V^*]$.
\end{itemize}

The following result of the second author was excluded from the final text of 
\cite{Y16}. .

\begin{thm}     \label{V-rank-1}
Suppose that $G$ and $V$ satisfy condition $(\diamondsuit)$ and $V^*\md G=\mathbb A^1$, i.e.,
$\bbk[V^*]^G=\bbk[F]$ for some homogeneous $F$. Let $H$ be a generic isotropy group for $(G:V^*)$
and $\h=\Lie H$.
Assume further that $D=\{x\in V^*\mid F(x)=0\}$ contains an open $G$-orbit, say $G{\cdot} y$, 
$\ind\g_y=\ind\h=:\ell$, and $\gS(\g_y)^{G_y}$ is a polynomial ring in $\ell$ variables  
with the same degrees of generators as $\gS(\h)^{H}$.  
Then $\bbk[\es^*]^S$ is a polynomial ring in $\ind\es=\ell+1$ variables.
\end{thm}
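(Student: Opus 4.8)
The plan is to derive the theorem from the criterion $(\blacklozenge)$ applied to $\es=\g\ltimes V$. As $S$ is connected, this amounts to two things: that $\es$ enjoys the {\sl codim}--$2$ property, and that $\bbk[\es^*]^S$ contains $\ind\es$ algebraically independent homogeneous elements whose degrees sum to $b(\es)$. I first fix the numerics. By $(\diamondsuit)$ the action $(G:V^*)$ is stable, so it has a generic stabiliser, which may be taken to be $\h$ and is reductive, and moreover $\trdeg\bbk(V^*)^G=\dim(V^*\md G)=1$ (as $\bbk[V^*]^G=\bbk[F]$); Ra\"is's formula then gives $\ind\es=1+\ind\h=\ell+1$, whence $b(\es)=\tfrac12(\dim\g+\dim V+\ell+1)$. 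Put $d=\deg F$ and fix basic invariants $h_1,\dots,h_\ell\in\gS(\h)^H$ of degrees $d_1,\dots,d_\ell$; by hypothesis the basic invariants of $\gS(\g_y)^{G_y}$ have the same degrees $d_1,\dots,d_\ell$.

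I would build the requisite $S$-invariants as follows. The first one is $F\in\bbk[V^*]^G\subseteq\bbk[\es^*]^S$, of bidegree $(0,d)$ by Remark~\ref{rem:useful}(ii). For the remaining $\ell$, recall the restriction maps $\rho_\xi\colon\bbk[\es^*]^S\to\bbk[\g^*{\times}\{\xi\}]^{G_\xi\ltimes V}\simeq\gS(\g_\xi)^{G_\xi}$ of \cite[Lemma~2.5]{Y16}, defined for every $\xi\in V^*$. Since the action is stable with one-dimensional quotient, the generic fibre of $F$ is a single orbit, isomorphic to $G/H$; consequently $\bbk(\es^*)^S$ contains the field of fractions of $\gS(\h)^H=\gS(\g_\xi)^{G_\xi}$ (for $\xi$ generic), so each $h_i$ is the image under $\rho_\xi$ of an $S$-invariant \emph{rational} function on $\es^*$. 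Clearing denominators with a suitable power of $F$ — which is legitimate because under $(\diamondsuit)$ the group $G$ has no proper semi-invariants in $\bbk[\es^*]^{1\ltimes V}$ (cf. the hypotheses of Proposition~\ref{non-red}), so the only semi-invariants occurring are powers of $F$ up to genuine $S$-invariants — one obtains, after extracting the bihomogeneous component of minimal $\g$-degree, bihomogeneous $S$-invariants $F_1,\dots,F_\ell$ with $\deg_\g F_i=d_i$ and $\rho_\xi(F_i)=h_i$ up to a nonzero scalar. Then $F,F_1,\dots,F_\ell$ are algebraically independent, because already $h_1,\dots,h_\ell$ are independent in $\gS(\g_\xi)^{G_\xi}$ and $F$ is nonconstant along the $\rho_\xi$-fibres.

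It remains to check the {\sl codim}--$2$ property for $\es$ and the degree identity $d+\sum_i\deg F_i=b(\es)$; this is where the hypotheses on the orbit $G{\cdot}y\subset D$ become essential. For {\sl codim}--$2$: let $\pi\colon\es^*=\g^*\oplus V^*\to V^*$ be the projection. If $v\in V^*_{\sf reg}$ (for the $G$-action) and $\gamma\in\pi^{-1}(v)=\g^*{\times}\{v\}$ is generic for the induced $(G_v\ltimes V)$-action, then $\dim S{\cdot}(\gamma,v)=\dim G{\cdot}v+(\dim\g-\ind\g_v)=(\dim V-1)+(\dim\g-\ell)=\dim\es-\ind\es$, so $(\gamma,v)\in\es^*_{\sf reg}$. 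Hence $\es^*\setminus\es^*_{\sf reg}$ lies over $V^*\setminus V^*_{\sf reg}$ together with the fibrewise non-generic loci. By hypothesis $\overline{G{\cdot}y}=D$ is the unique codimension-one component of $V^*\setminus V^*_{\sf reg}$ — every other non-generic $G$-orbit, being of codimension $\ge 2$ in $V^*$, has preimage of codimension $\ge 2$ in $\es^*$ — while over $G{\cdot}y$ the equality $\ind\g_y=\ell$ guarantees that a generic point of $\g^*\times G{\cdot}y$ is again $S$-regular; the leftover set inside $\g^*\times G{\cdot}y$ is proper and closed there, hence of codimension $\ge 2$ in $\es^*$ since $\codim_{V^*}G{\cdot}y=1$. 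For the degree identity: the inequality $d+\sum_i\deg F_i\ge b(\es)$ is the general lower bound from \cite{coadj}, available once {\sl codim}--$2$ is known; for the reverse inequality one restricts the $F_i$ to $\g^*\times G{\cdot}y$, where $\rho_y(F_i)$ must be, up to scalar, the $i$-th basic invariant of $\gS(\g_y)^{G_y}$ — of degree $d_i$, by the hypothesis that the degree lists of $\gS(\g_y)^{G_y}$ and $\gS(\h)^H$ agree — and this forces $\deg_V F_i$ to be as small as possible, i.e. forces equality.

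With the {\sl codim}--$2$ property and the degree identity in hand, $(\blacklozenge)$ gives $\bbk[\es^*]^S=\bbk[F,F_1,\dots,F_\ell]$, a graded polynomial ring in $\ind\es=\ell+1$ variables, as claimed. The step I expect to be the real obstacle is the passage in the middle of the argument: extracting the \emph{polynomial} bihomogeneous invariants $F_i$ of the predicted $\g$-degree from the rational ones, and then matching $\deg_V F_i$ with the behaviour over the open orbit $G{\cdot}y$ precisely enough to pin the degree sum down to $b(\es)$ — it is exactly here that all the quantitative hypotheses ($\ind\g_y=\ell$ and the coincidence of the degree lists) must be fed in, whereas the index computation, the inclusion $F\in\bbk[\es^*]^S$, and the codimension estimate around $D$ are comparatively routine.
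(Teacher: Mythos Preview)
Your approach via $(\blacklozenge)$ differs substantially from the paper's and, as you already suspect, the degree identity $d+\sum_i\deg F_i=b(\es)$ is where it breaks down. Your justification for the reverse inequality --- that $\rho_y(F_i)$ ``must be, up to scalar, the $i$-th basic invariant of $\gS(\g_y)^{G_y}$'' and that this ``forces $\deg_V F_i$ to be as small as possible'' --- is not substantiated: nothing in your denominator-clearing construction prevents $F_i$ from vanishing identically on $\g^*\times D$ (so that $\rho_y(F_i)=0$), and even when $\rho_y(F_i)\ne 0$ there is no reason it should be a \emph{basic} invariant of $\gS(\g_y)^{G_y}$ rather than a decomposable one. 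The coincidence of degree lists for $\gS(\g_y)^{G_y}$ and $\gS(\h)^H$ simply does not translate into an upper bound on the $V$-degrees of the particular $F_i$'s you have built. (There is also a harmless slip in the codim--$2$ discussion: since $\dim G{\cdot}y=\dim D=\dim V-1$ equals the maximal $G$-orbit dimension, the orbit $G{\cdot}y$ already lies in $V^*_{\sf reg}$, so $D$ is not a component of $V^*\setminus V^*_{\sf reg}$; rather $V^*\setminus V^*_{\sf reg}=D\setminus G{\cdot}y$ has codimension $\ge 2$ outright.)

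The paper avoids $(\blacklozenge)$ and the codim--$2$ property entirely, relying instead on \cite[Lemma~3.5]{Y16}. Part~(i) of that lemma supplies bi-homogeneous invariants $\bH_1,\dots,\bH_\ell$ with $\deg_\g\bH_i=d_i$; part~(ii) says they freely generate $\bbk[\es^*]^S$ over $\bbk[F]$ once the restrictions $\bH_i\vert_{\g^*\times D}$ are algebraically independent. The proof is then an iterative modification: if the restrictions to $\g^*\times D$ are dependent, so are the further restrictions $\tilde{\bf h}_i:=\rho_y(\bH_i)\in\gS(\g_y)^{G_y}$; because the latter is a \emph{polynomial} ring with the same degree list as $\gS(\h)^H$, the first dependent element $\tilde{\bf h}_{d+1}$ can be written as a polynomial $R(\tilde{\bf h}_1,\dots,\tilde{\bf h}_d)$ in the earlier ones, and one replaces $\bH_{d+1}$ by the appropriate bi-homogeneous component of $\tfrac{1}{F}\bigl(\bH_{d+1}-R(\bH_1,\dots,\bH_d)\bigr)$. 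Each such step strictly decreases $\sum_i\deg\bH_i$, so the process terminates. Thus the hypothesis on $\g_y$ is used not to bound degrees directly, but to guarantee that the dependence relation at $y$ is \emph{polynomial} --- which is precisely what makes the division by $F$ legitimate.
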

\begin{proof}
If $\ell=0$, then $\bbk[\es^*]^S=\bbk[F]$ and we have nothing to do. 
Assume that $\ell\ge 1$.
Let $\{{\bH}_i\mid 1\le i\le \ell\}$ be bi-homogeneous $S$-invariants chosen as  
in \cite[Lemma\,3.5(i)]{Y16}.  
Assume that $\deg_{\g} {\bH}_i\le \deg_{\g} {\bH}_j$ if $i<j$. We will show that these polynomials 
can be modified in such a way that the new set 
satisfies the conditions of  \cite[Lemma\,3.5(ii)]{Y16} and therefore freely generates 
$\bbk[\es^*]^S$ over $\bbk[F]$. 

Notice that $F$ is an irreducible polynomial, 
because $(\diamondsuit)$ includes also the absence of proper semi-invariants. 
Thereby $\bbk[D]^G=\bbk$ and a non-trivial relation over $\bbk[D]^G$ among 
$\tilde {\bH}_i={{\bH}_i}\vert_{\g^*{\times} D}$ 
gives also a non-trivial relation among  $\tilde{\bf h}_i={{\bH}_i}\vert_{\g^*{\times}\{y\}}$. Recall that $\tilde{\bf h}_i\in \gS(\gt g_y)^{G_y}$ 
by \cite[Lemma~2.5]{Y16}.
Assume that 
$\tilde{\bf h}_1,\ldots,\tilde{\bf h}_j$ are algebraically independent 
if $j=d$ and dependent for $j=d+1$. Then $\tilde{\bf h}_{d+1}$ is not among the generators of 
$\gS(\g_y)^{G_y}$ and it can be expressed as a polynomial 
$R(\tilde{\bf h}_1,\ldots,\tilde{\bf h}_d)$. Then also 
$\tilde {\bH}_{d+1}-R(\tilde {\bH}_1,\ldots,\tilde {\bH}_d)=0$ and we can replace 
${\bH}_{d+1}$ by the bi-homogeneous part of $\frac{1}{F}({\bH}_{d+1}-R({\bH}_1,\ldots,{\bH}_d))$ 
of bi-degree $(\deg_{\g} {\bH}_{d+1},\deg_{V} {\bH}_{d+1}-\deg F)$.
Clearly, $\sum_i\deg {\bH}_i$ is decreasing and therefore the process will end up at some stage and bring 
a new set $\{{\bH}_i\}$ satisfying the conditions of \cite[Lemma\,3.5(ii)]{Y16}. 
\end{proof}

\begin{rmk}    \label{rem:deg-V}
Although Theorem~\ref{V-rank-1} asserts that $\bbk[\es^*]^S$ is a polynomial ring under certain conditions, 
it does not say anything about the (bi)degrees of the generators ${\bH}_i$. Finding these degrees is not an 
easy task. Let us say a few words about it. 

Suppose that $G$ is semisimple, $\bbk[\gt s^*]^S$ is a polynomial ring, and 
$\gt s$ has the {\sl codim}--$2$ property. As is mentioned above, 
$\ind\es={\rm tr.deg}\,\bbk(V^*)^G+\ind\g_\xi$ with $\xi\in V^*$ generic. 
Here  ${\rm tr.deg}\,\bbk(V^*)^G=\dim  V^*\md G$ and 
$\bbk[\gt s^*]^S=\bbk[{\bH}_1,\ldots,{\bH}_\ell, F_1,\ldots,F_r]$, where $\ell=\ind\g_\xi$, 
$r= \dim  V^*\md G$, and all generators are bi-homogeneous. 
The generators $F_j$ are elements of $\gS (V)$. 
The $\gt g$-degrees of the polynomials ${\bH}_i$ are the degrees of basic invariants in 
$\gS (\g_\xi)^{G_\xi}$. 
Furthermore, $\sum_{i=1}^{\ell} \deg_V {\bH}_i + \sum_{j=1}^r \deg F_j = \dim V$, see 
\cite[Section~2]{Y} for a detailed explanation.  Thus, the only open problem is how to determine the
$V$-degrees of the ${\bH}_i$. In particular, the problem simplifies considerably, if $\ell$ is small.
\end{rmk}

\section{The classification and table}
\label{sect:rank}

\noindent
In this section, $G$ is an {\it \bfseries exceptional} algebraic group, i.e., $G$ is a simple algebraic group of
one of the types $\mathsf E_6, \mathsf E_7, \mathsf E_8, 
\mathsf F_4, \mathsf G_2$. We classify the (finite-dimensional 
rational) representations $(G:V)$ such that the symmetric invariants of $\es=\g\ltimes V$ form a polynomial ring. This will be referred to as property ({\sf FA}) for $\es$.
We also say that $\es$ (or just the action $(G:V)$)  is {\it good} (resp. {\it bad}), if ({\sf FA}) is 
(resp. is not) satisfied for $\es$.
\\ \indent To distinguish exceptional groups and Lie algebras, we write, say, $\GR{E}{7}$ for the group and
$\eus E_7$ for the respective algebra; while the corresponding  Dynkin type is referred to as $\mathsf E_7$.

\begin{ex} \label{ex:adjoint}
If $G$ is arbitrary semisimple, then $\g\ltimes\g$ always has  ({\sf FA})~\cite{takiff}.
Therefore we exclude the adjoint representations from our further consideration. 
\end{ex}

If $\bbk[\es^*]^S$ is a polynomial ring, then so is $\bbk[V^*]^G$ (Remark~\ref{rem:useful}(ii)). For this 
reason, we have to examine all representations of $G$ with polynomial rings of invariants.
If $(G:V)$ is a representation of an exceptional algebraic group such that $V\ne \g$ and $\bbk[V]^G$ is 
a polynomial ring, then $V$ or $V^*$ is contained in Table~\ref{table-ex1}. 
This can be  extracted from the classifications in \cite{ag79} or \cite{gerry1}.  Furthermore, the algebras
$\bbk[V]^G$ and $\bbk[V^*]^G$ (as well as 
$\gS(\g\ltimes V)^{G\ltimes V}$ and $\gS(\g\ltimes V^*)^{G\ltimes V^*}$) are isomorphic, so that it is 
enough to keep track of only $V$ or $V^*$.
As in~\cite{ag79,alela1,Y16}, 
we use the Vinberg-Onishchik numbering of fundamental weights $\varpi_i$. Here $H$ is a generic isotropy 
group for $(G:V^*)$ and column ({\sf FA}) refers to the presence of property that  $\bbk[\es^*]^S$ is a 
polynomial ring, where $\es=\g\ltimes V$. We write $q(V\md G)$ for the sum of degrees of the basic invariants in $\bbk[V^*]^G$. 

In Table~\ref{table-ex1}, the group $H$ is always reductive. Since $G$ is semisimple, this implies that 
the action $(G:V^*)$ is stable in all cases,  see~\cite[Theorem~7.15]{VP}. The fact that $G$ is semisimple 
means also that  $G$, as well as $G\ltimes V$, has only trivial characters and therefore has no
proper semi-invariants.

\begin{table}[ht]
\caption{Representations of the exceptional groups with polynomial ring $\bbk[V^*]^G$}   \label{table-ex1}
\begin{center}
\begin{tabular}{>{\sf}c<{}|>{$}c<{$}>{$}c<{$}>{$}c<{$}>{$}c<{$}>{$}c<{$}>{$}c<{$}ccc}
 {\rus N0}&  G & V & \dim V & \dim V^*\md G & q(V\md G) & H & $\ind\es$ & ({\sf FA})   & Ref. \\ \hline \hline
1a & \GR{G}{2} & \varpi_1 & 7 & 1 &  2 &  \GR{A}{2} & 3 & $+$ & Eq.~\eqref{eq:chain2}\\  
1b &  & 2\varpi_1 & 14 & 3 & 6  & \GR{A}{1}  & 4 & + & \cite[Ex.\,4.8]{Y16} \\
1c &   & 3\varpi_1 & 21 & 7 & 15  & \{1\} & 7 & + &   Example~\ref{ex:h=0}\\[.5ex] \hline
2a & \GR{F}{4} & \varpi_1 & 26 & 2 & 5  & \GR{D}{4}  & 6 & $-$ &  Example~\ref{ex:F4-fi1} \\ 
2b &   & 2\varpi_1 & 52 & 8 & 22  & \GR{A}{2}  & 10 & $+$ & Eq.~\eqref{eq:chain2} \\ 
\hline
3a & \GR{E}{6}  & \varpi_1 & 27 & 1 & 3  & \GR{F}{4} & 5 & +  &  Example~\ref{ex:E6-fi1} \\  
3b &   & \varpi_1+\varpi_5 & 54 & 4 & 12  & \GR{D}{4}  &  8 & $-$ & Example~\ref{ex:E6-mnogo-fi1}\\ 
3c &   & 2\varpi_1 & 54 & 4 & 12  & \GR{D}{4}  & 8 & $-$  & Example~\ref{ex:E6-mnogo-fi1} \\
3d &   & 3\varpi_1 & 81 & 11 & 36  & \GR{A}{2}  & 13 & $+$ & Theorem~\ref{thm:E6-good}\\ 
3e &   & 2\varpi_1+\varpi_5 & 81 & 11 & 36  & \GR{A}{2}  & 13 & $+$ & Theorem~\ref{thm:E6-good} \\ 
\hline
4a & \GR{E}{7} & \varpi_1 & 56 & 1 & 4  & \GR{E}{6}  & 7 & $-$ &Theorem~\ref{thm:E7-bad}\\
4b &   & 2\varpi_1 & 112 & 7 & 28  & \GR{D}{4}  & 11 & $-$  & Example~\ref{ex:E7-mnogo-fi1}\\ \hline
\end{tabular}
\end{center}
\end{table}

We provide below necessary explanations. 
\begin{ex}   \label{ex:E6-fi1}
Consider item {\sf 3a} in the table. Here $V^*\md G=\mathbb A^1$, i.e.,
$\bbk[V^*]^G=\bbk[F]$ for some homogeneous $F$. The divisor $D=\{\xi\in V^*\mid F(\xi)=0\}$ contains a 
dense $G$-orbit,
say $G{\cdot}\eta$, whose stabiliser is the semi-direct product 
$\g_\eta=\mathfrak{so}_9\ltimes \varpi_4$. A generic isotropy group for
$(G:V^*)$ is the exceptional group $\GR{F}{4}$ and $\g_\eta$ is a $\BZ_2$-contraction of $\eus F_4$.
By~\cite[Theorem\,4.7]{coadj}, $\gS(\g_\eta)^{G_\eta}$ is a polynomial ring whose degrees 
of basic invariants are the same as those for $\eus F_{4}$. Therefore, using Theorem~\ref{V-rank-1}, we 
obtain that $\bbk[\es^*]^S$ is a polynomial ring.
\end{ex}

\begin{ex}   \label{ex:h=0}
If $\h=0$, then $\bbk[\es^*]^S\simeq \bbk[V^*]^G$~\cite[Theorem\,6.4]{p05} (cf.~\cite[Example\,3.1]{Y16}). Therefore item~{\sf 1c} is a good case.
\end{ex}
\begin{ex}   \label{ex:F4-fi1}
The semi-direct product in {\sf 2a} is a $\BZ_2$-contraction of $\eus E_6$. This is one of the four bad 
$\BZ_2$-contractions of simple Lie algebras, i.e., $\bbk[\es^*]^S$ is not a polynomial ring here, 
see~\cite[Section 6.1]{Y16}.
\end{ex}

If $G$ is semisimple, $V$ is a reducible $G$-module, say $V=V_1\oplus V_2$, 
then there is a trick that allows us to relate the polynomiality problem for the symmetric invariants of $\es=\g\ltimes V$ to a smaller semi-direct product. The precise statement is as follows.

\begin{prop}   \label{prop:trick}
With $\es=\g\ltimes (V_1\oplus V_2)$ as above,
let $H$ be a generic isotropy group for $(G:V^*_1)$ and $\gt h=\Lie H$. If\/ $\bbk[\es^*]^S$ is a polynomial ring, then so is\/
$\bbk[\tilde\q^*]^{\tilde Q}$, where $\tilde\q=\h\ltimes (V_2\vert_H)$. 
\end{prop}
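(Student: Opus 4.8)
The plan is to extract, from a polynomial ring of invariants for the big semi-direct product $\es=\g\ltimes(V_1\oplus V_2)$, a polynomial ring of invariants for the smaller one $\tilde\q=\h\ltimes(V_2\vert_H)$, by restricting to the slice over a generic point of $V_1^*$. The machinery for this is already in hand: Remark~\ref{rem:useful} gives the bi-grading $\bbk[\es^*]^S=\bigoplus \bbk[\es^*]^S_{(a,b,c)}$ with respect to the decomposition $\es^*=\g^*\oplus V_1^*\oplus V_2^*$, and Proposition~\ref{non-red} is exactly the tool that turns ``polynomiality upstairs'' into ``polynomiality on the slice'' --- provided we first absorb one of the vector parts. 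So the first step is to regard $\es$ as $(\g\ltimes V_1)\ltimes V_2$, set $\g'=\g\ltimes V_1$, $G'=G\ltimes V_1$, and observe that $G'$ is connected with no proper characters (it is a semidirect product of the semisimple $G$ with a unipotent group), hence has no proper semi-invariants in $\bbk[\es^*]$; thus the hypotheses of Proposition~\ref{non-red} hold for the pair $(G', V_2)$, $\es=G'\ltimes V_2$. Wait --- Proposition~\ref{non-red} is stated for $V$ a single module, so one applies it with $V=V_2$ and the ambient group $G'$.

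Next I would identify the slice. Applying Proposition~\ref{non-red} to $(G':V_2)$ is not quite what we want, since it would fix a generic point of $V_2^*$, not of $V_1^*$. Instead, the cleaner route is to fix a generic $\xi\in V_1^*$ first. By stability of $(G:V_1^*)$ (automatic here since $G$ is semisimple, by \cite[Theorem~7.15]{VP}), the isotropy group $G_\xi$ is reductive with Lie algebra $\g_\xi=\h$ (a generic stabiliser), and $G_\xi{}^{\circ}$ has $H$ as identity component. The restriction of functions to $\g^*\times\{\xi\}\times V_2^*$ is a graded algebra homomorphism $\bbk[\es^*]^S\to \bbk[\g^*\times\{\xi\}\times V_2^*]^{S_\xi}$, and the target is naturally $\bbk[(\h\ltimes V_2\vert_H)^*]^{\tilde Q}$ --- this identification is the content of the isomorphisms $\bbk[\g^*\times\{\xi\}]^{G_\xi\ltimes V_1}\simeq\gS(\g_\xi)^{G_\xi}$ quoted after Proposition~\ref{non-red}, here carried along with the extra factor $V_2^*$. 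The key claim is that \emph{this restriction map is surjective}. That is exactly where Proposition~\ref{non-red} (first bullet) is invoked: the surjectivity of $\psi$ there is proved for the slice over a generic point of the vector part; one reorders the two vector summands so that $V_1$ plays the role of ``$V$'' in that proposition and $V_2$ is carried as an inert spectator in $\g^*$'s companion --- more precisely, apply the proposition to $G$ acting on $V_1$ with $\g$ replaced by $\g\ltimes V_2$ throughout (this is again a connected group with no proper semi-invariants, and $\es=(\g\ltimes V_2)\ltimes V_1$). Then $\psi$ surjects onto $\gS((\g\ltimes V_2)_\xi)^{G_\xi}=\gS(\g_\xi\ltimes V_2\vert_{H})^{G_\xi}=\bbk[\tilde\q^*]^{\tilde Q}$, and the last equality uses that $\gS(\g_\xi)^{G_\xi}=\gS(\g_\xi)^{\g_\xi}$ from the second bullet.

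Once surjectivity is established, I would finish as follows: a graded polynomial ring maps \emph{onto} $\bbk[\tilde\q^*]^{\tilde Q}$, so the target is generated by the images of a set of basic invariants of $\bbk[\es^*]^S$; to conclude that the target is itself polynomial I would invoke the third bullet of Proposition~\ref{non-red} --- but that bullet is stated for the slice $\gS(\g_\xi)^{G_\xi}$ without the $V_2$ factor, so strictly I would re-run its proof with the spectator module present. In fact the cleanest packaging is: apply Proposition~\ref{non-red} verbatim to the group $\g\ltimes V_2$ acting on the module $V_1$; its three bullets then read ``$\psi$ surjective'', ``$\gS((\g\ltimes V_2)_\xi)^{(G\ltimes V_2)_\xi}=\gS((\g\ltimes V_2)_\xi)^{(\g\ltimes V_2)_\xi}$'', and ``$\gS((\g\ltimes V_2)_\xi)^{(G\ltimes V_2)_\xi}$ is a polynomial ring in $\ind((\g\ltimes V_2)_\xi)$ variables'' --- and since $(\g\ltimes V_2)_\xi=\g_\xi\ltimes(V_2\vert_H)=\tilde\q$ and $(G\ltimes V_2)_\xi$ has identity component $\tilde Q$, the third bullet is \emph{exactly} the assertion that $\bbk[\tilde\q^*]^{\tilde Q}$ is a polynomial ring, which is what we want.

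\medskip

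The step I expect to cause the most trouble is the bookkeeping around disconnectedness and the precise identification of the slice: $G_\xi$ need not be connected, so $\bbk[\tilde\q^*]^{\tilde Q}$ versus $\bbk[\g^*\times\{\xi\}\times V_2^*]^{S_\xi}$ must be handled with the observation (already flagged in the text after Proposition~\ref{non-red}) that $G_\xi$ being reductive forces $\gS(\g_\xi)^{G_\xi}=\gS(\g_\xi)^{\g_\xi}=\gS(\g_\xi)^{G_\xi^{\circ}}$, and likewise with the $V_2$-factor adjoined; once one grants that these invariant rings do not see the component group, everything aligns. The other mild subtlety is checking that $\g\ltimes V_2$, as the ambient group in the application of Proposition~\ref{non-red}, genuinely satisfies the hypothesis ``no proper semi-invariants in $\bbk[\es^*]^{1\ltimes V_1}$'': this holds because $G\ltimes V_2$ has only trivial characters ($G$ semisimple, $V_2$ unipotent) and its semi-invariants on any $G\ltimes V_2$-variety are therefore invariants. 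No estimate on the number or degrees of generators is needed --- just the three bullets of Proposition~\ref{non-red} applied with the summands of $V$ in the right order.
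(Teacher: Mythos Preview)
Your proposal is correct and, once you settle on the right ordering, it is exactly the paper's argument: write $\es=(\g\ltimes V_2)\ltimes V_1=\q\ltimes V_1$ with $\q=\g\ltimes V_2$, note that $Q=G\ltimes V_2$ has no proper semi-invariants since $G$ is semisimple, and apply Proposition~\ref{non-red} to the pair $(Q,V_1)$; then $\q_\xi=\h\ltimes(V_2\vert_H)=\tilde\q$ for generic $\xi\in V_1^*$, and the second and third bullets give that $\gS(\tilde\q)^{Q_\xi}=\gS(\tilde\q)^{\tilde\q}$ is polynomial. Your discussion of the disconnectedness issue and the no-semi-invariants check is accurate but not needed beyond what the paper records in one line.
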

\begin{proof}
Consider the (non-reductive) semi-direct product $\q=\g\ltimes V_2$. Then
$\es=\q\ltimes V_1$. It is assumed that the unipotent radical of $Q$, $1\ltimes V_2$, acts trivially on $V_1$.
If $\xi\in V^*_1$ is generic for $(G:V^*_1)$, then it is also generic for $(Q:V^*_1)$. If $G_\xi=H$, then the corresponding isotropy group in $Q$
is $Q_\xi=H\ltimes V_2$ and $\q_\xi=\h\ltimes V_2\simeq \tilde \q$, where $V_2$ is considered as 
$H$-module.
Since $G$ is semisimple, all hypotheses of Proposition~\ref{non-red} are satisfied for $Q$ in place of $G$ 
and $V=V_1$. Therefore $\gS(\q_\xi)^{Q_\xi} =\gS(\q_\xi)^{\q_\xi}$ 
is a polynomial ring. 
\end{proof}

\begin{rmk}
It is easily seen that the passage from $(G:V=V_1\oplus V_2)$ to $(H:V_2)$, where $H=\gig(G:V_1)$, preserves generic isotropy groups. For generic stabilisers, this appears already in \cite[\S\,3]{alela1}.
\end{rmk}
One can use Proposition~\ref{prop:trick} as a tool for proving that $\bbk[\es^*]^S$ is not a polynomial ring.

\begin{ex}   \label{ex:E6-mnogo-fi1}
In case~{\sf 3b}, we take $Q=\GR{E}{6}\ltimes \varpi_1$ and $V_1=\varpi_5$. Then 
$\es=(\eus E_6\ltimes\varpi_1)\ltimes\varpi_5\simeq \eus E_6\ltimes(\varpi_1+\varpi_5)$.
If $\xi\in V^*_1=\varpi_1$ is generic, then $\g_\xi\simeq \eus F_4$  and 
$\varpi_1\vert_{\GR{F}{4}}=\tilde\varpi_1+\odin$ \cite{alela1}. Therefore, $\q_\xi$ is isomorphic to the semi-direct product  
related to item 2a, modulo a one-dimensional centre. Therefore, $\gS(\q_\xi)^{Q_\xi}$ is not 
a polynomial ring (see Example~\ref{ex:F4-fi1}),  and we conclude, using Proposition~\ref{prop:trick}, that 
$\bbk[\es^*]^S$ is not a polynomial ring, too.
\\ \indent
In case~{\sf 3c}, we take the same $Q$ and $V_1=\varpi_1$. The rest is more or less the same, since
$\GR{F}{4}\ltimes (\varpi_1\vert_{\GR{F}{4}})$ is again a generic isotropy subgroup for $(Q:V_1^*)$. 
\end{ex}

\begin{ex}   \label{ex:E7-mnogo-fi1}
In case~{\sf 4b}, we take $Q=\GR{E}{7} \ltimes \varpi_1$ and $V_1=\varpi_1$.
Note that $\varpi_1$ is a symplectic $\GR{E}{7}$-module. If $\xi\in V_1^*$ is generic, then the corresponding stabiliser in $\eus E_7$
is  isomorphic to $\eus E_6$~\cite{H,alela1}. Thereby  
$\q_\xi=\eus E_6\ltimes (\varpi_1\vert_{\eus E_6})=\eus E_6\ltimes (\tilde\varpi_1+\tilde\varpi_5+2\odin)$ \cite{alela1}. 
Hence $\q_\xi$ represents item~{\sf 3b} (modulo a two-dimensional centre) and we have already demonstrated in the previous example that here $\bbk[\q^*_\xi]^{Q_\xi}$ is not 
a polynomial ring!
\end{ex}

The output of Examples~\ref{ex:E6-mnogo-fi1} and \ref{ex:E7-mnogo-fi1} is that there is the tree of 
reductions to a ``root'' bad semi-direct product:
\[
\xymatrix{  & (\GR{E}{6}, 2\varpi_1)\ar[dr] &   \\
(\GR{E}{7}, 2\varpi_1)\ar[r]  & (\GR{E}{6}, \varpi_1+\varpi_5)\ar[r] &  \text{\framebox{$(\GR{F}{4},\varpi_1)$}},
}\]
and therefore all these items represent bad semi-direct products.
Using Proposition~\ref{prop:trick} in 
a similar fashion, one obtains another tree  of reductions:
\beq   \label{eq:chain2}
\xymatrix{
(\GR{E}{6}, 2\varpi_1+\varpi_5)\ar[dr] & & & \\
(\GR{E}{6}, 3\varpi_1)\ar[r] & (\GR{F}{4},2\varpi_1)\ar[r] & (\GR{D}{4},\varpi_1+\varpi_3+\varpi_4)\ar[r] & (\GR{G}{2},\varpi_1). 
}
\eeq
Some details for the passage from $\eus E_6$ to $\mathfrak{so}_8$ and then to $\eus G_2$ are given below in the proof of Theorem~\ref{thm:E6-good}.
Note that the representations occurring in~\eqref{eq:chain2} have one and the same generic isotropy group, 
namely $SL_3$. As we will shortly see, tree~\eqref{eq:chain2} consists actually of good cases. Here 
our strategy is to prove that both ``crown'' $\GR{E}{6}$-cases are good. To this end, we need some 
properties of the representation $(\GR{G}{2}, \varpi_1)$ related to the ``root'' case. 

\begin{lm}    \label{lm:G2-case}
Let $v_1$ be a highest weight vector in the $\GR{G}{2}$-module $\varpi_1$ and 
$Q:=(\GR{G}{2})_{v_1}$ the respective isotropy group. Then 
{\sf (i)} $\q=\Lie Q$ has the {\sl codim}--$2$ property and {\sf (ii)} the coadjoint representation of $Q$  
has a polynomial ring of invariants whose degrees of basic invariants are $2,3$.
\end{lm}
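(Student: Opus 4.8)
The group $\GR{G}{2}$ acts on $V=\varpi_1$, a $7$-dimensional module, with $\bbk[V]^{\GR{G}{2}}=\bbk[F]$ for a quadratic form $F$ (item {\sf 1a}), and the highest weight vector $v_1$ lies on the null-cone $\{F=0\}$. The first task is to identify $Q=(\GR{G}{2})_{v_1}$ and its Lie algebra $\q$ explicitly. Since $v_1$ is a highest weight vector, $Q$ contains the unipotent radical $U$ of the parabolic $P$ stabilising the line $\bbk v_1$, and the reductive part of $Q$ is the stabiliser of $v_1$ inside the Levi $L$ of $P$. For the short root case of $\GR{G}{2}$ the Levi is of type $\mathsf A_1$ (plus a torus), and one checks that $\q\cong \gt{sl}_2\ltimes \bbk^2$ extended by nilpotent pieces, of dimension $14-7+1 = 8$ (since $\dim\GR{G}{2}\cdot v_1 = 6$, as $v_1$ is generic in the null-cone and $\dim(\text{null-cone})=6$). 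So $\dim\q = 8$. A clean way to nail down $\q$ is to observe, following Example~\ref{ex:E6-fi1}-type reasoning, that $\q$ is obtained by contraction: $v_1$ being on $D=\{F=0\}$ means $\q=\gt g_{v_1}$ is a $\BZ_2$- or parabolic contraction of a reductive algebra, and in the $\GR{G}{2}$ case the relevant reductive algebra is $\gt{sl}_3$ (the generic isotropy algebra, item {\sf 1a}). Concretely I expect $\q \cong \gt{sl}_2 \ltimes (\bbk^3)$ or a two-step nilpotent extension thereof; pinning down the exact structure is the main preliminary computation.

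Once $\q$ is known, part {\sf (ii)} follows from the criterion $(\blacklozenge)$ combined with part {\sf (i)}. The index $\ind\q$: by Raïs's formula applied to the contraction picture, or by direct computation, $\ind\q = 2$ (matching $\rk\gt{sl}_3 = 2$, as contractions preserve the index when things are nice). The magic number is $b(\q) = (\dim\q + \ind\q)/2 = (8+2)/2 = 5$. So I need two homogeneous algebraically independent invariants $f_1, f_2 \in \bbk[\q^*]^Q$ with $\deg f_1 + \deg f_2 = 5$, i.e. degrees $2$ and $3$. These should be produced as restrictions of the Casimir-type invariants: the symmetric invariants of $\gt{sl}_3$ have degrees $2, 3$, and under contraction they either survive as invariants of the same degree or degenerate; the content of the claim is that here they survive with degrees exactly $2, 3$. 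Alternatively, one can exhibit them by hand from the explicit matrix form of $\q$ — a degree-$2$ invariant coming from an invariant pairing and a degree-$3$ one from a determinant-like expression. Algebraic independence is checked by evaluating differentials at a single generic point.

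For part {\sf (i)}, the {\sl codim}--$2$ property, I would use the standard dichotomy: the non-regular locus $\q^* \setminus \q^*_{\sf reg}$ is a closed $Q$-stable subset, and it has codimension $\ge 2$ unless it contains a divisor, which (since $Q$ has no proper semi-invariants when $Q$ is, say, observed to have trivial character group, or by a direct argument) would have to be a hypersurface $\{f = 0\}$ with $f$ a $Q$-invariant. One then rules this out by showing that a generic point of each codimension-$1$ subvariety is still regular — typically by producing, for $\xi$ ranging over a candidate divisor, enough elements of $\q_\xi$ to see that $\dim\q_\xi$ does not jump. Given the small dimension ($\dim\q = 8$) this is a finite check. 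I expect part {\sf (i)} to be the genuine obstacle: contraction algebras frequently fail the {\sl codim}--$2$ property, so the verification is not automatic and will require examining the nilpotent structure of $\q$ carefully, possibly by passing to $\q^* \cong \q$ via a suitable (degenerate) form and stratifying by the rank of the relevant nilpotent blocks. Once {\sf (i)} and the two invariants of degrees $2,3$ are in hand, $(\blacklozenge)$ closes the argument and simultaneously identifies $\q^*_{\sf reg}$.
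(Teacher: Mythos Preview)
Your outline follows the same overall architecture as the paper's proof (identify $\q$, compute $\ind\q=2$ and $b(\q)=5$, exhibit invariants of degrees $2$ and $3$, verify the \textsl{codim}--$2$ property, apply $(\blacklozenge)$), but it stops short of the concrete structural input on which everything hinges, and your proposed route to part~{\sf (i)} is both vaguer and harder than what is actually needed.

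The paper first pins down $\q$ explicitly: $\q=\el\ltimes\n$ with $\el=\mathfrak{sl}_2$ and $\n$ a $5$-dimensional $\BZ$-graded nilpotent radical $\n(1)\oplus\n(2)\oplus\n(3)=\bbk^2\oplus\bbk\oplus\bbk^2$, with $[\n(1),\n(1)]=\n(2)$ and $[\n(1),\n(2)]=\n(3)$. Your guesses ``$\mathfrak{sl}_2\ltimes\bbk^3$ or a two-step nilpotent extension thereof'' do not capture this, and without it neither {\sf (i)} nor {\sf (ii)} can be carried out. With the grading in hand, part~{\sf (i)} is a one-line observation rather than the obstacle you anticipate: for any $0\ne\xi\in\n(3)^*\subset\q^*$ one checks $\dim\q_\xi=2$, so the whole affine open $\{\xi\in\q^*:\xi\vert_{\n(3)}\ne 0\}$ lies in $\q^*_{\sf reg}$ (by upper semicontinuity of stabiliser dimension along the grading $\bbk^*$-action), and since $\dim\n(3)=2$ its complement has codimension~$2$. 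Your proposed divisor-by-divisor check via absence of proper semi-invariants is unnecessary and would require justifying that $Q$ has no characters, which you only assert.

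For part~{\sf (ii)}, the paper does not rely on the contraction heuristic (``the $\mathfrak{sl}_3$ invariants survive with the same degrees''), which is not automatic: under contraction the leading terms can drop in degree or vanish. Instead it writes down ${\bf h}_1=2a_1b_2-2b_1a_2+u^2\in\gS(\n)$ and ${\bf h}_2=b_2^2e+a_2b_2h-a_2^2f+u(a_1b_2-a_2b_1)+\tfrac{1}{3}u^3$ in explicit coordinates, verifies invariance by a direct Poisson-bracket computation (using that $\el$ and $a_1$ generate $\q$), and gets algebraic independence from ${\bf h}_1\in\gS(\n)$, ${\bf h}_2\notin\gS(\n)$. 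Also note that contraction only gives $\ind\q\ge 2$; the reverse inequality comes from the explicit $\xi\in\n(3)^*$ above, not from ``contractions preserve index when things are nice''.
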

\begin{proof} A generic isotropy group for $(\GR{G}{2}:\varpi_1)$ is connected and isomorphic to 
$SL_3$~\cite{H} and $\q$ is a contraction of $\mathfrak{sl}_3$ (See \cite[Ch.\,7, \S\,2]{t41} for Lie algebra contractions).
We also have $\q=\el{\ltimes}\n$, where $\el=\gt{sl}_2$ and 
the nilpotent radical $\n$ is a 5-dimensional $\BZ$-graded non-abelian Lie algebra of the form 
\[
         \n(1)\oplus\n(2)\oplus\n(3)=\bbk^2_{I}\oplus \bbk\oplus \bbk^2_{II} .
\] 
Here $\bbk^2_{I}$ and $\bbk^2_{II}$ are standard $\tri$-modules and $\bbk$ is the trivial $\tri$-module.
Let $\{a_1,b_1\}$ be a basis for $\bbk^2_{I}$, $\{a_2,b_2\}$  a basis for $\bbk^2_{II}$, and $\{u\}$ a basis 
for $\bbk$. W.l.o.g., we may assume that $[a_1,b_1]=u$, 
$[a_1,u]=a_2$, and $[b_1,u]=b_2$. 

{\sf (i)} \ Since $\q$ is a contraction of $\mathfrak{sl}_3$ and $\ind \mathfrak{sl}_3=2$, we have
$\ind\q\ge 2$. On the other hand, if $0\ne \xi\in\n(3)^*\subset \n^*\subset \q^*$, then $\dim\q_\xi=2$.
Hence $\ind\q=2$ and $\n(3)^*\setminus\{0\}\subset \q^*_{\sf reg}$. Since $\dim\n(3)=2$, the last property readily implies that
$\q^*\setminus \q^*_{\sf reg}$ cannot contain divisors.

{\sf (ii)} \ It is  easily seen that 
\[
      {\mathbf h}_1=2a_1b_2-2b_1a_2+u^2 \in\gS(\n) \subset \gS(\q)
\]
is a $\q$-invariant. There is also another invariant of 
degree three. Let $\{e,h,f\}$ be a standard basis  of $\tri$ (i.e., $[e,f]=h, [h,e]=2e, [h,f]=-2f$). We assume that 
$[e,a_1]=0$ and $[f,a_1]=b_1$, which implies $[h,a_1]=a_1$ and $[h,b_1]=-b_1$. Then 
\[
      {\mathbf h}_2=b_2^2e+a_2b_2h-a_2^2f+u(a_1b_2-a_2b_1)+\frac{1}{3}u^3
\]
is an $\tri$-invariant and in addition, the following Poisson bracket can be computed:
\[
    \{a_1,{\mathbf h}_2\}=a_2b_2(-a_1)-a_2^2(-b_1)+a_2(a_1b_2-a_2b_1)-ua_2u+u^2a_2=0 .
\] 
Since $\el=\tri$ and $a_1$ generate $\q$ as Lie algebra, ${\bf h}_2$ is also a $\q$-invariant. The 
polynomials ${\mathbf h}_1$ and ${\mathbf h}_2$ are algebraically independent, because
${\mathbf h}_1\in {\gS}(\n)$ and  ${\bf h}_2\not\in {\gS}(\n)$. 
By {\sf (i)}, $\q$ has the {\sl codim}--$2$ property. Since $\dim\q=8$, $\ind\q=2$, and $b(\q)=5$,
we have $\deg {\mathbf h}_1+\deg {\mathbf h}_2=b(\q)$.
Therefore, ${\bf h}_1$ and ${\bf h}_2$ freely generate ${\gS}(\q)^{Q}$, see $(\blacklozenge)$ in
Section~\ref{sect:prelim}.
\end{proof}

\begin{rmk}
In this case, $(\GR{G}{2})_{v_1}$ is a so-called {\it truncated parabolic subgroup}. 
Symmetric invariants of truncated (bi)parabolics 
were intensively studied by Fauquant-Millet and Joseph, see e.g. 
\cite{FMJ, J}. 
Let $\rr_{\sf tr}$ be a truncated (bi)parabolic in type {\sf A} or {\sf C}. Then 
${\eus S}(\rr_{\sf tr})^{\rr_{\sf tr}}$ is a polynomial ring in $\ind\rr_{\sf tr}$ homogeneous generators and 
the sum of their degrees is equal to $b(\rr_{\sf tr})$. 
The same properties hold for many truncated (bi)parabolics in other types~\cite{J}. It is very probable that 
a sufficient condition of~\cite{J} is satisfied for $(\eus G_2)_{v_1}$. However, we prefer to keep the explicit construction of generators.   
\end{rmk}

\begin{thm}   \label{thm:E6-good}
If\/ $\es =\eus E_6\ltimes (3\varpi_1)$ or $\es =\eus E_6\ltimes(2\varpi_1+\varpi_5)$, then $\bbk[\es^*]^S$ is a polynomial ring.
\end{thm}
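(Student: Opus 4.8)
The plan is to apply the reduction trick of Proposition~\ref{prop:trick} in reverse, together with Theorem~\ref{V-rank-1}, tracing the tree~\eqref{eq:chain2} from the ``root'' case $(\GR{G}{2},\varpi_1)$ up to the two ``crown'' $\eus E_6$-cases. The key point, already prepared by Lemma~\ref{lm:G2-case}, is that for all five representations in~\eqref{eq:chain2} the generic isotropy group is $SL_3$; and Lemma~\ref{lm:G2-case} shows that the relevant ``degenerate'' stabiliser $\q=(\eus G_2)_{v_1}$ is a contraction of $\mathfrak{sl}_3$ with the {\sl codim}--$2$ property and with $\gS(\q)^Q$ polynomial of the same generator degrees $(2,3)$ as $\gS(\mathfrak{sl}_3)^{SL_3}$. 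This is precisely the data needed to feed Theorem~\ref{V-rank-1}.

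First I would handle $(\GR{G}{2},\varpi_1)$ itself (item {\sf 1a}): here $V^*\md G=\mathbb A^1$, the null-cone $D=\{F=0\}$ of the quadratic invariant $F$ contains the dense orbit $\GR{G}{2}{\cdot}v_1$ through a highest weight vector, whose stabiliser is $\q=(\eus G_2)_{v_1}$, while a generic isotropy group is $H=SL_3$ with $\h=\mathfrak{sl}_3$. By Lemma~\ref{lm:G2-case}, $\gS(\q)^Q$ is polynomial in $\ell=2=\ind\mathfrak{sl}_3$ variables with generator degrees $2,3$, matching those of $\gS(\mathfrak{sl}_3)^{SL_3}$. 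Condition $(\diamondsuit)$ holds because $G$ is semisimple (stability and absence of proper semi-invariants come from the discussion after Table~\ref{table-ex1}, and $\bbk[\g^*_\xi]^{G_\xi}=\bbk[\mathfrak{sl}_3^*]^{SL_3}$ is polynomial). Theorem~\ref{V-rank-1} then gives that $\bbk[\es^*]^S$ is polynomial for $\es=\eus G_2\ltimes\varpi_1$. Next I would move up the chain: at each arrow $(G,V)\to(G',V')$ of~\eqref{eq:chain2}, where $G'=\gig(G:W^*)$ for some summand $W$ of $V$ and $V'=(V/W)\vert_{G'}$ (modulo a central torus which does not affect polynomiality of symmetric invariants), one checks that the smaller semi-direct product $\q'=\gt g'\ltimes V'$ — which we have just shown is good — is, via Proposition~\ref{non-red}, exactly the stabiliser-subalgebra $\q_\xi$ appearing when one restricts the larger $(G,V)$-problem to a generic point of $W^*$. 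The identifications of restrictions $\varpi_1\vert_{\GR{F}{4}}$, $(\text{spin or }26)\vert_{\GR{D}{4}}$, etc., are the ones recorded in~\cite{alela1} and partially indicated in the remark after~\eqref{eq:chain2}; these I would cite rather than recompute.

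The genuinely new work, and the main obstacle, is the top step: passing from $(\GR{F}{4},2\varpi_1)$ or the intermediate $\eus E_6$-data up to the two crown cases $(\GR{E}{6},3\varpi_1)$ and $(\GR{E}{6},2\varpi_1+\varpi_5)$. Here $V$ decomposes as $\varpi_1+(\text{something})$ with $\gig(\GR{E}{6}:\varpi_1^*)=\GR{F}{4}$, and one must verify that the $\GR{F}{4}$-module obtained by restricting the complementary summand is indeed the $\varpi_1$ (resp.\ $2\varpi_1$) of $\GR{F}{4}$ up to trivial summands, so that $\q_\xi$ is (centrally) the good semi-direct product $\eus F_4\ltimes(2\varpi_1)$ — this is the ``passage from $\eus E_6$ to $\mathfrak{so}_8$ and then to $\eus G_2$'' alluded to after~\eqref{eq:chain2}. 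But Proposition~\ref{prop:trick} and Proposition~\ref{non-red} only propagate polynomiality \emph{downward}; to go \emph{upward} I must instead invoke Theorem~\ref{V-rank-1} directly for $(\GR{E}{6},3\varpi_1)$, which forces me to exhibit, inside the null-cone of the single basic invariant $F\in\bbk[V^*]^{\GR{E}{6}}$ (note $\dim V^*\md G=11$ here, so this is \emph{not} the rank-one situation) — hmm, so in fact the rank is $11$, and Theorem~\ref{V-rank-1} does not apply verbatim. The correct route is therefore the reverse-reduction one: one writes $\es=\q\ltimes V_1$ with $V_1=\varpi_1$ and $Q=\GR{E}{6}\ltimes(V/V_1)$, and shows that because $\bbk[\tilde\q^*]^{\tilde Q}$ with $\tilde\q=\eus F_4\ltimes(2\varpi_1\text{ or }\varpi_1{+}\odin)$ \emph{is} polynomial (established lower in the tree), together with the degree bookkeeping of Remark~\ref{rem:deg-V} ($\sum\deg_V\bH_i+\sum\deg F_j=\dim V$ and the $\g$-degrees of the $\bH_i$ are the degrees of basic invariants of $\gS(\g_\xi)^{G_\xi}=\gS(\eus F_4\text{-stabiliser})$), one can run the argument of~\cite[Lemma~3.5]{Y16} upward. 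Concretely: take the bi-homogeneous candidate invariants from \cite[Lemma~3.5(i)]{Y16}, restrict to a generic fibre $\g^*\times\{\xi\}$, $\xi\in V_1^*$, use that $\gS(\q_\xi)^{Q_\xi}$ is polynomial with known generator degrees to see the restrictions are a free generating set or can be corrected (dividing by the appropriate invariant as in the proof of Theorem~\ref{V-rank-1}), and conclude via \cite[Lemma~3.5(ii)]{Y16}. The main obstacle is thus twofold: (a) pinning down the $V$-degrees of the $\bH_i$ for the $\eus E_6$-cases, where Remark~\ref{rem:deg-V} leaves genuine freedom since $\ell=\ind\eus F_4\text{-stab}$ is not tiny; and (b) checking the {\sl codim}--$2$ property for $\es=\eus E_6\ltimes(3\varpi_1)$ (and for $2\varpi_1+\varpi_5$), which is needed to invoke $(\blacklozenge)$-type arguments and which should follow from the corresponding property for the $\eus F_4$-stabiliser (Lemma~\ref{lm:G2-case}{\sf (i)} at the bottom, propagated up) plus a dimension count on the non-regular locus in $V^*$. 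I expect step (a) to be the real crux and would resolve it by the explicit construction of a few low-degree invariants on $\eus E_6\ltimes(3\varpi_1)$ combined with the constraint $\sum_i\deg_V\bH_i+\sum_j\deg F_j=\dim V=81$.
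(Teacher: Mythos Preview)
Your proposal has a genuine gap: the ``reverse-reduction'' route you sketch does not work as stated, and you have misidentified the key parameter $\ell$.

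The paper does \emph{not} propagate polynomiality upward along tree~\eqref{eq:chain2}. You correctly observe that Proposition~\ref{prop:trick} only goes downward and that Theorem~\ref{V-rank-1} does not apply since $\dim V^*\md G=11$. But your attempted fix --- writing $\es=\q\ltimes V_1$ with $\q=\eus E_6\ltimes(V/V_1)$ non-reductive and then invoking \cite[Lemma~3.5]{Y16} for $\q$ --- is misconceived: that lemma is formulated for $\es=\g\ltimes V$ with the \emph{full} reductive $G$ acting on the \emph{full} $V$, not for an intermediate non-reductive $\q$. More importantly, you write that ``$\ell=\ind\eus F_4\text{-stab}$ is not tiny'', but this is the wrong $\ell$. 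The generic isotropy group for $(\GR{E}{6}:V^*)$ is $SL_3$, so $\ell=\ind\mathfrak{sl}_3=2$. Only \emph{two} bi-homogeneous invariants $F_1,F_2$ (with $\deg_\g F_1=2$, $\deg_\g F_2=3$) are needed over $\bbk[V^*]^G$, and this is what makes the problem tractable.

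The paper's actual argument applies \cite[Lemma~3.5(i),(ii)]{Y16} directly to $(\GR{E}{6},V)$: one must show that $F_1\vert_{\g^*\times D}$ and $F_2\vert_{\g^*\times D}$ remain algebraically independent for \emph{every} $G$-stable homogeneous divisor $D\subset V^*$. No knowledge of the $V$-degrees is required, and the {\sl codim}--$2$ property is not used. The divisor check is done by projecting $D$ to the three simple summands of $V^*$; at least two projections are dominant, and one reduces first to a $Spin_8$-stable divisor in $\tvp_1+\tvp_3+\tvp_4$, then by the same trick to a $\GR{G}{2}$-stable subset of $\hat\varpi_1$. The upshot is that any such $D$ contains a point $\eta$ with $G_\eta\simeq SL_3$ or $G_\eta\simeq(\GR{G}{2})_{v_1}$; in either case $\gS(\g_\eta)^{G_\eta}$ has algebraically independent generators of degrees $2$ and $3$ (Lemma~\ref{lm:G2-case} for the latter), forcing $F_1\vert_{\g^*\times\{\eta\}}$ and $F_2\vert_{\g^*\times\{\eta\}}$ to be independent. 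So tree~\eqref{eq:chain2} enters the proof not as a bootstrap device, but as the geometric reduction that identifies the possible stabilisers $G_\eta$ on an arbitrary divisor.
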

\begin{proof}
Here $G=\GR{E}{6}$, $\g=\eus E_6$, and $V=3\varpi_1$ or $2\varpi_1+\varpi_5$. 
Accordingly, $V^*=3\varpi_5$ or $2\varpi_5+\varpi_1$. In both cases, $\ind\es=13$, $V^*\md G\simeq \mathbb A^{11}$, and a 
generic isotropy group for $(G:V^*)$ is $SL_3$. By \cite[Theorem\,2.8 and Lemma\,3.5({\sf i})]{Y16}, there 
are bi-homogeneous irreducible polynomials $F_1,F_2\in \bbk[\es^*]^S$ such that $\deg_{\g}F_1=2$, $\deg_{\g}F_2=3$, and 
their restrictions to $\g^*\times\{\xi\}$, where $\xi\in V^*$ is $G$-generic,  are the basic invariants of
$SL_3$. Here, for $\g^*\times\{\xi\} \subset \es^*$, we use the  isomorphism
\beq    \label{eq:canon-iso}
  \bbk[\g^*\times\{\xi\}]^{G_\xi\times V}\simeq \gS(\g_\xi)^{G_\xi} ,
\eeq
see \cite[Lemma~2.5]{Y16}. 
By \cite[Lemma\,3.5({\sf ii})]{Y16}, $F_1$ and $F_2$ generate $\bbk[\es^*]^S$ over $\bbk[V^*]^G$ if and only if
the restrictions $F_1\vert_{\g^*\times D}$ and $F_2\vert_{\g^*\times D}$ are algebraically 
independent for any $G$-stable homogeneous divisor $D\subset V^*$. Let us prove that this is really the case. 
\\  \textbullet\quad 
If there is a non-trivial relation for the restrictions of $F_1$ and $F_2$ to $\g^*\times D$, then this also yields 
a non-trivial relation for the subsequent restrictions of $F_1$ and $F_2$ to $\g^*\times\{\eta\}$, where 
$\eta$ is $G$-generic in $D$.
\\  \textbullet\quad 
If $G_\eta\simeq SL_3$, i.e., $\eta$ is also $G$-generic in $V^*$, then $F_1\vert_{\g^*\times\{\eta\}}$ and $F_2\vert_{\g^*\times \{\eta\}}$
are nonzero elements of $\gS(\mathfrak{sl}_3)^{SL_3}$ of degrees $2$ and $3$, respectively.
The invariants of degree $2$ and $3$ in $\gS(\mathfrak{sl}_3)^{SL_3}$ are uniquely determined, up to a scalar factor, and they are algebraically independent. Hence $F_1\vert_{\g^*\times D}$ and $F_2\vert_{\g^*\times D}$ are algebraically independent for such divisors.
\\  \textbullet\quad
However, it can happen that a divisor $D$ contains no ``globally'' $G$-generic points. To circumvent this difficulty, consider three projections from $V^*$ to its simple constituents, and their (non-trivial!) restrictions to $D$:
\[ \xymatrix{
                  & D\ar[dl]_{p_1}\ar[d]^{p_2}\ar[dr]^{p_3} & \\
         \varpi_5 & \varpi_5 &  \varpi_1 \text{ or } \varpi_5  
}\]
For $\eta=y_1+y_2+y_3\in D\subset V^*$, we have
$p_i(\eta)=y_i$. Since $D$ is a divisor, at least two of the $p_i$'s are dominant.  
Without loss of generality, we may assume that $p_1$ and $p_2$ are dominant, whereas
$p_3$ is not and then $\ov{p_3(D)}$ is a divisor in $p_3(V^*)$. (For, if all $p_i$'s are dominant, then $D$ contains a globally generic point.)
Therefore, we can take $y_1,y_2$ to be generic in the $p_i(V^*)$.
Then $y_1+y_2$ is $G$-generic in $2\varpi_5$, $G_{y_1+y_2}=Spin_8$, and
$\varpi_1\vert_{Spin_8}\simeq \varpi_5\vert_{Spin_8}\simeq \tvp_1+\tvp_3+\tvp_4+3\odin$.
Here $G_\eta=(Spin_8)_{y_3}$.
Having obtained a $Spin_8$-stable divisor $\ov{p_3(D)}$ in the $Spin_8$-module $ \tvp_1+\tvp_3+\tvp_4+3\odin$, we consider $\tilde V:=\tvp_1+\tvp_3+\tvp_4$ and
$\tilde D:=\ov{p_3(D)}\cap \tilde V$. If $\tilde D=
\tilde V$, then $(Spin_8)_{y_3}=SL_3$ for some $y_3$, i.e., again $G_\eta=SL_3$.
\\  \textbullet\quad
If $\tilde D$ is a divisor in $\tilde V$, then we
can play the same game with $Spin_8$ and $\tilde D$. Let $y_3=x_1+x_3+x_4\in \tilde D$, where
$x_i\in\tvp_i$. Again, at least two of the projections $\tilde p_i:\tilde V\to \tvp_i$ ($i=1,3,4$) are dominant.
Without loss of generality, we may assume that $\tilde p_1$ and $\tilde p_3$ are dominant and then
$x_1$ and $x_3$ are generic elements. Then
$(Spin_8)_{x_1+x_3}\simeq \GR{G}{2}$, $(\GR{G}{2})_{x_4}=(Spin_8)_{y_3}$,
and $\tvp_4\vert_{\GR{G}{2}}\simeq \hat \varpi_1+\odin$.
The structure of $\GR{G}{2}$-orbits in $\hat \varpi_1$ shows that either $x_4$ is $\GR{G}{2}$-generic and then
$(\GR{G}{2})_{x_4}\simeq SL_3$ or $x_4$ is a highest weight vector and 
$(\GR{G}{2})_{x_4}\simeq (\GR{G}{2})_{v_1}$, cf. Lemma~~\ref{lm:G2-case}.

Thus, for any $G$-stable divisor $D\subset V^*$, there is $\eta\in D$ such that 
$G_\eta=SL_3$ or $(\GR{G}{2})_{v_1}$, and in both cases $\gS(\g_\eta)^{G_\eta}$ is generated by  
algebraically independent invariants of degree $2$ and $3$ (see Lemma~\ref{lm:G2-case} for the latter).
It follows that $F_1\vert_{\g^*\times D}$ and $F_2\vert_{\g^*\times D}$ are algebraically 
independent, and we are done.
\end{proof}

Combining Proposition~\ref{prop:trick}, Theorem~\ref{thm:E6-good}, and tree~\eqref{eq:chain2}, we conclude that
cases {\sf 3d, 3e, 2b}, and {\sf 1a} are good. (Note also that we have found one good case related to a representation of the classical Lie algebra $\eus D_4$.)
Thus, it remains to handle only the semi-direct product  
$\eus E_7\ltimes \varpi_1$ (case {\sf 4a}).

\begin{thm}   \label{thm:E7-bad}
If\/ $\es =\eus E_7\ltimes \varpi_1$, then $\bbk[\es^*]^S$ is not a polynomial ring.
\end{thm}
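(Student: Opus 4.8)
The plan is to assume, for contradiction, that $\bbk[\es^*]^S$ is a polynomial ring and to derive an absurdity by restricting a system of basic invariants to the fibre $\g^*\times\{y\}$ over a generic point $y$ of the quartic hypersurface $D=\{\xi\in V^*\mid F(\xi)=0\}$. Here $G=\GR{E}{7}$, $V=\varpi_1$ is the $56$-dimensional representation, $\bbk[V^*]^G=\bbk[F]$ with $F$ irreducible of degree $4$, and $\g_\xi\simeq\eus E_6$ for generic $\xi\in V^*$, so that $\ind\es=1+\ind\g_\xi=7$ and $\bbk[\es^*]^S$ would be a polynomial ring in $7$ variables, one of which is $F$.

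The geometric input concerns the dense $G$-orbit $G{\cdot}y$ in $D$. As $\dim G{\cdot}y=\dim D=55$, the stabiliser $\g_y=\Lie G_y$ has dimension $78$ and is a contraction of the generic stabiliser $\eus E_6$; from the classification of $\GR{E}{7}$-orbits on $\varpi_1$ (see \cite{H,alela1}), $\g_y$ is the $\BZ_2$-contraction of $\eus E_6$ along the symmetric subalgebra $\eus F_4$, that is, $\g_y\simeq\eus F_4\ltimes\bbk^{26}$, which is the Lie algebra of item~{\sf 2a} in Table~\ref{table-ex1}. In particular $\gS(\g_y)^{G_y}$ is \emph{not} a polynomial ring (Example~\ref{ex:F4-fi1}, i.e.\ \cite[\S\,6.1]{Y16}). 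On the other hand, a $\BZ_2$-contraction of a simple Lie algebra preserves the index and enjoys the {\sl codim}--$2$ property (cf.\ \cite{coadj}), so $\ind\g_y=\rk\eus E_6=6$ and $b(\g_y)=(78+6)/2=42$.

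Assume now $\bbk[\es^*]^S=\bbk[F,\bH_1,\dots,\bH_6]$ with bi-homogeneous $\bH_i$. Restricting the generators to $\g^*\times\{\xi\}$ for generic $\xi$ and applying Proposition~\ref{non-red} (its hypotheses hold since $G$ is semisimple), one finds that $\bH_1|_{\g^*\times\{\xi\}},\dots,\bH_6|_{\g^*\times\{\xi\}}$ generate $\gS(\g_\xi)^{G_\xi}=\gS(\eus E_6)^{\GR{E}{6}}$ (the image of $F$ is the nonzero constant $F(\xi)$); hence they are algebraically independent, and $\{\deg_{\g}\bH_i\}=\{2,5,6,8,9,12\}$ is the set of degrees of the basic invariants of $\eus E_6$. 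Set $h_i:=\bH_i|_{\g^*\times\{y\}}$; by \cite[Lemma~2.5]{Y16} these lie in $\gS(\g_y)^{G_y}$, with $\deg h_i=\deg_{\g}\bH_i$. Crucially, $h_1,\dots,h_6$ remain algebraically independent: because $F$ is irreducible, the kernel of the restriction $\bbk[\es^*]^S\to\bbk[\g^*\times D]$ is $F\cdot\bbk[\es^*]^S$, so this map is injective on the polynomial subring $\bbk[\bH_1,\dots,\bH_6]$; and since $G{\cdot}y$ is dense in $D$, the ensuing restriction to $\g^*\times(G{\cdot}y)$, hence (by $G$-equivariance and \cite[Lemma~2.5]{Y16}) the passage to $\gS(\g_y)^{G_y}$, is injective as well. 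Thus $\bbk[h_1,\dots,h_6]\subseteq\gS(\g_y)^{G_y}$ is a polynomial ring in $6=\ind\g_y$ variables.

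Finally, $\sum_i\deg h_i=2+5+6+8+9+12=42=b(\g_y)$. Since $\g_y$ has the {\sl codim}--$2$ property, property $(\blacklozenge)$ of Section~\ref{sect:prelim} forces $\gS(\g_y)^{G_y}=\bbk[h_1,\dots,h_6]$ to be a polynomial ring, contradicting the second paragraph; therefore $\bbk[\es^*]^S$ is not polynomial. The main obstacle is the second step: correctly identifying $\g_y$ as precisely the \emph{bad} $\BZ_2$-contraction $\eus F_4\ltimes\bbk^{26}$ and establishing its {\sl codim}--$2$ property, since this is exactly what drives the final contradiction; the rest is bookkeeping with the machinery of \cite{Y16} together with the numerical coincidence $2+5+6+8+9+12=b(\g_y)$. (If the {\sl codim}--$2$ property of $\g_y$ is hard to verify directly, an alternative endgame is to invoke the explicit structure of $\gS(\g_y)^{G_y}$ from \cite[\S\,6.1]{Y16} to rule out the polynomial subring $\bbk[h_1,\dots,h_6]$ of full transcendence degree.)
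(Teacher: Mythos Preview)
Your argument is correct and follows essentially the same route as the paper: assume polynomiality, obtain bi-homogeneous generators $\bH_1,\dots,\bH_6$ whose $\g$-degrees are the $\eus E_6$ degrees, show that their restrictions $h_i$ to $\g^*\times\{y\}$ (with $y$ in the dense $G$-orbit of $D$) remain algebraically independent, and then contradict what is known about $\gS(\eus F_4\ltimes\varpi_1)$ from \cite[\S6.1]{Y16}. The only cosmetic difference is the endgame: you invoke the {\sl codim}--$2$ property of $\g_y$ together with $(\blacklozenge)$ and the numerical coincidence $2+5+6+8+9+12=b(\g_y)$ to conclude that $\gS(\g_y)^{G_y}$ would be polynomial (contradicting Example~\ref{ex:F4-fi1}), whereas the paper cites directly from \cite[\S6.1]{Y16} the sharper fact that $\gS(\g_y)^{G_y}$ admits no algebraically independent family with the $\eus E_6$ degrees --- which is of course equivalent once $(\blacklozenge)$ is available.
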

\begin{proof}
Here  $G=\GR{E}{7}$, $\g=\eus E_7$, and $\bbk[V^*]^G=\bbk[F]$. By~\cite{H}, a generic isotropy group for
$(G:V^*)$ is 
connected and isomorphic to $\GR{E}{6}$, and  the null-cone
$D=\{\xi\in V^*\mid F(\xi)=0\}$ contains  a dense $G$-orbit, say $G{\cdot}y$, whose isotropy group 
$G_y$ is connected and isomorphic to $\GR{F}{4}\ltimes\varpi_1$.

Assume that $\bbk[\es^*]^S$ is polynomial. 
By \cite[Theorem\,2.8 and Lemma\,3.5({\sf i})]{Y16}, since $\ind\es=7$ and $\gig$ is  
$\GR{E}{6}$, there are bi-homo\-ge\-ne\-ous polynomials $H_1,\dots,H_6$ 
such that $\bbk[\es^*]^S=\bbk[F,{\bH}_1,\dots,{\bH}_6]$ and the ${\bH}_i\vert_{\g^*\times\{\xi\}}$'s yield the basic invariants of $\eus E_6$ for a generic $\xi$. Here we again use Proposition~\ref{non-red} and isomorphism~\eqref{eq:canon-iso}.  
By \cite[Lemma~3.5(ii)]{Y16}, the restrictions ${\bH}_i\vert_{\g^*\times D}$, $i=1,\dots,6$ remain algebraically independent.
On the other hand, let us consider further restrictions ${\bH}_i\vert_{\g^*\times \{y'\}}$, $i=1,\dots,6$, where $y'$ belongs to the dense $G$-orbit in $D$. Recall that $\g_{y'}\simeq \eus F_{4}\ltimes \varpi_1$ and the latter is a bad $\BZ_2$-contraction of $\eus E_{6}$, see Example~\ref{ex:F4-fi1}. Moreover, the algebra of symmetric invariants of $\eus F_{4}\ltimes \varpi_1$ does not have algebraically independent invariants whose degrees are the same as the degrees of basic invariants of $\eus E_{6}$~\cite[Section~6.1]{Y16}. This implies that ${\bH}_i\vert_{\g^*\times \{y'\}}$, $i=1,\dots,6$ must be algebraically dependant for any $y'\in G{\cdot}y$.  

Let $L({\bH}_1\vert_{\g^*\times \{y'\}}, \dots, {\bH}_6\vert_{\g^*\times \{y'\}})=0$ be a polynomial relation 
for {\bf some} $y'$. Since the ${\bH}_i$'s are $G$-invariant, the relation with the same coefficients holds 
for {\bf all} $y'\in G{\cdot}y$. Hence, this dependance can be lifted to $\g^*\times G{\cdot}y$ and then 
carried over to $\g^*\times D$. This contradiction shows that the ring $\bbk[\es^*]^S$ cannot be polynomial.
\end{proof}

Summarising, we obtain the main result of the article:

\begin{thm} \label{thm:main}
Let $G$ be an exceptional algebraic group, $V$ a (finite-dimensional rational) $G$-module, and
$\es=\g\ltimes V$. Then $\bbk[\es^*]^S$ is a polynomial ring if and only if one of the following two conditions is satisfied: {\sf (i)} $V=\g$; {\sf (ii)}  $V$ or $V^*$ represents one of the seven good cases in Table~\ref{table-ex1}. 
\\  Moreover, by Ra\"is' formula (see Section~\ref{sect:kos-th}), one has 
$\trdeg \bbk[\es^*]^S=\ind\es=\dim V^*\!\md G +\rk H$.
\end{thm}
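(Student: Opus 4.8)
The plan is to assemble Theorem~\ref{thm:main} from the case-by-case analysis already carried out, so the proof is mostly a bookkeeping argument that collects the pieces. First I would invoke Remark~\ref{rem:useful}(ii): if $\bbk[\es^*]^S$ is a polynomial ring, then $\bbk[V^*]^G$ must be polynomial too. Combined with Example~\ref{ex:adjoint} (which lets us set aside $V=\g$, always good by \cite{takiff}), this means the only candidates to examine are those $(G,V)$ with $V\ne\g$ and $\bbk[V^*]^G$ polynomial, and by the classifications of \cite{ag79,gerry1} every such $(G,V)$ has $V$ or $V^*$ appearing in Table~\ref{table-ex1}. Since $\bbk[V]^G\cong\bbk[V^*]^G$ and $\gS(\g\ltimes V)^{G\ltimes V}\cong\gS(\g\ltimes V^*)^{G\ltimes V^*}$ (as noted after Example~\ref{ex:adjoint}), it suffices to decide the property ({\sf FA}) for each row of the table. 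Thus the theorem reduces to checking the column ({\sf FA}) of Table~\ref{table-ex1} is complete and correct.

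Next I would walk through the rows and cite the relevant results. The good cases: {\sf 1c} follows from Example~\ref{ex:h=0} (since $\h=0$); {\sf 3a} from Example~\ref{ex:E6-fi1} via Theorem~\ref{V-rank-1}; {\sf 1b} from \cite[Ex.\,4.8]{Y16}; {\sf 3d} and {\sf 3e} from Theorem~\ref{thm:E6-good}; and then {\sf 2b} and {\sf 1a} follow from {\sf 3d}/{\sf 3e} by chasing tree~\eqref{eq:chain2} using Proposition~\ref{prop:trick} in the contrapositive-free direction---more precisely, the positive-propagation argument sketched in the paragraph before Theorem~\ref{thm:E7-bad} (where it is explicitly stated that cases {\sf 3d, 3e, 2b, 1a} are good). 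The bad cases: {\sf 2a} from Example~\ref{ex:F4-fi1}; {\sf 3b} and {\sf 3c} from Example~\ref{ex:E6-mnogo-fi1} via Proposition~\ref{prop:trick}; {\sf 4b} from Example~\ref{ex:E7-mnogo-fi1}; and {\sf 4a} from Theorem~\ref{thm:E7-bad}. That exhausts all rows, so the dichotomy {\sf (i)}/{\sf (ii)} is established.

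For the ``Moreover'' clause I would argue as follows. When $\bbk[\es^*]^S$ is polynomial, it is finitely generated, so $\trdeg\bbk[\es^*]^S$ equals the number of basic invariants, which is $\ind\es$ by $(\blacklozenge)$-type reasoning (equivalently, Rosenlicht's theorem $\ind\es=\trdeg\bbk(\es^*)^S$ together with the fact that a polynomial invariant ring of the right transcendence degree computes the whole field of invariants). Then Ra\"is' formula from Section~\ref{sect:kos-th} gives $\ind\es=\trdeg\bbk(V^*)^G+\ind\g_\xi$ for generic $\xi\in V^*$. Since $G$ is reductive and $(G:V^*)$ is stable in all the good cases (as observed in the discussion preceding Table~\ref{table-ex1}), a generic stabiliser $\g_\xi$ may be taken equal to $\h=\Lie H$ with $H$ reductive, so $\ind\g_\xi=\rk\h=\rk H$; also $\trdeg\bbk(V^*)^G=\dim V^*\!\md G$. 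For case {\sf (i)} ($V=\g$) one has $\es=\g\ltimes\g$, a Takiff algebra, and the same identity $\ind\es=2\rk\g=\dim\g\md G+\rk\g$ holds with $H=G$. Combining gives $\trdeg\bbk[\es^*]^S=\ind\es=\dim V^*\!\md G+\rk H$ in all cases.

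The main obstacle is not in this final theorem itself---it is a routine synthesis---but lies in the inputs it rests on: the hardest genuine work is the algebraically-independence argument for divisors in the proof of Theorem~\ref{thm:E6-good} (the iterated descent $\eus E_6\rightsquigarrow\mathfrak{so}_8\rightsquigarrow\eus G_2$) and the explicit construction of the degree-$2,3$ generators for $(\eus G_2)_{v_1}$ in Lemma~\ref{lm:G2-case}, both of which are already supplied above. A minor point to be careful about in writing this up is to state clearly that it is enough to track $V$ or $V^*$ (not both), and to note that the ({\sf FA}) verdict is insensitive to this choice; and to make sure every row of Table~\ref{table-ex1} has been assigned to exactly one citation, leaving no gaps.
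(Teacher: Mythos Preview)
Your proposal is correct and matches the paper's approach exactly: the paper presents Theorem~\ref{thm:main} with the single word ``Summarising'' and no further proof, treating it as the evident collation of the case-by-case results in Section~\ref{sect:rank}, which is precisely the bookkeeping argument you spell out. Your justification of the ``Moreover'' clause is in fact more detailed than what the paper offers (the paper simply invokes Ra\"is' formula), and your row-by-row citations are all accurate.
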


\noindent
In the good cases, we neither construct generators nor give their degrees. The reason is that the main results of \cite{Y16} as well as Theorem~\ref{V-rank-1} are purely existence theorems. Yet, as explained in Remark~\ref{rem:deg-V}, a great deal of information on the degrees is available. If $\ell$ is small, then using ad hoc methods one can determine all the degrees.
Let us see how this can be done for an example with $\ell=2$. 
Take item~{\sf 1a}. Since $\gt h=\gt{sl}_3$, we have 
$\bbk[\gt s^*]^S=\bbk[V^*]^G[\bH_1,\bH_2]$ with $\deg_{\g}\bH_1=2$, $\deg_{\g} \bH_2=3$. 
Set $a_i=\deg_V \bH_i$. It can easily be seen that $\es$ has the {\sl codim}--$2$ property. 
The discussion in Remark~\ref{rem:deg-V} shows that $a_1+a_2=5$. 
Following the same strategy as in \cite[Prop.\,3.10]{Y16}, one can produce an $S$-invariant 
of bi-degree $(2,2)$. This implies that $a_1=2$ and $a_2=3$.

\end{document}